\documentclass[10pt,a4paper]{amsart}
\usepackage[utf8]{inputenc}
\usepackage{amsmath}
\usepackage{amsfonts}
\usepackage{amssymb}
\usepackage{graphicx}
\usepackage{lipsum}

\usepackage[toc,page]{appendix}

\newtheorem{theorem}{Theorem}[section]
\newtheorem{lemma}[theorem]{Lemma}
\newtheorem{proposition}[theorem]{Proposition}

\newtheorem{definition}[theorem]{Definition}

\newtheorem{example}{Example}
\newtheorem{remark}{Remark}

\begin{document}

\author{G. I. Montecinos}
\title{Analytic solutions for the Burgers
  equation with source terms}

\address{Center for Mathematical Modeling (CMM) \\
 Universidad de  Chile\\
 Beauchef 851, Edificio Norte,  Piso 7, Santiago - Chile}

\email{G. I. Montecinos:  gmontecinos@dim.uchile.cl}

\maketitle

\begin{abstract}

Analytic solutions for Burgers equations with source terms, possibly stiff,  represent an important element to assess
numerical schemes.  Here we present a procedure, based on the characteristic technique to obtain analytic solutions for these equations with smooth initial conditions.

\end{abstract}

\section{Introduction}
Exact solutions of hyperbolic balance laws are very useful to assess
the performance of numerical schemes. Non-linearity of
partial differential equation, as well as, stiffness of source terms are
desirable features to be recovered by numerical schemes.  
The analytic solutions presented here contain these elements.

Exact solutions for Burgers equations are generally obtained by separation of
variables \cite{Estevez:2002a}, regularization techniques \cite{Norgard:2008a},
expansion methods \cite{Wang:2008a}, to mention but a few.  Here, the strategy to solve these equations is based on the characteristic
curve method, see \cite{thomas:1995a,leVeque:1992a}. The solution is obtained in two steps. 
First, the solution of an Ordinary Differential
Equation (ODE) called here, {\it equivalent ODE}, is
obtained. This ODE  is constructed by following the conventional characteristic curve method and contains the influence of the source term.  Second, the solution of an
 ODE, called here  {\it characteristic ODE} is obtained. This equation is defined  in the $x-t$ plane and the solution of the {\it equivalent ODE} is included.  In this way the influence of the source term is
present in the definition of the characteristic curve. 

To ensure the solvableness of the {\it equivalent ODE}, 
the existence of a primitive function for the reciprocal of the source
term is required. Subsequently, if the solution of the {\it equivalent ODE} has a primitive function then the {\it characteristic ODE} is solvable. If these requirements are satisfied, analytic solutions can be obtained. However, a non-linear algebraic equation has to be solved in
the general case, which require the initial condition to be a
continuous function.

This work is organized as follows. In section \ref{general:source} the
procedure is presented. In section \ref{no:source}, the procedure is applied to
the homogeneous Burgers equation. In section \ref{linear:source}  the
solution for a linear source term is obtained. In section \ref{quadratic:source}
the analytic solution for  a quadratic source term is
obtained.  In section  \ref{non-linear:source}, the methodology is applied to solve Burgers's equation with a more general non-linear source term.  Finally, in section \ref{conclusion} the main results of this work are summarized.

\section{Analytic solutions for Burgers equations with a special class of source
  terms}\label{general:source}
 To start, let us consider a balance law in the form
\begin{eqnarray}\label{eq:exact:0-0}
\left.
\begin{array}{c}
\partial_t q(x,t)+\partial_x\biggl(\frac{q(x,t)^2}{2}\biggr) = s(q(x,t))\;,\\
q(x,0) = h_0(x)\;, 
\end{array}
\right\}      
\end{eqnarray}
where $h_0(x)$ is a continuous  initial condition and $s(q)$ is the source
term which has to satisfy the properties of  lemma \ref{lemma:1} shown below. 

By following the characteristic curve
method we define a curve $x$ in the  $x-t$ plane, which satisfies
\begin{eqnarray}\label{eq:exact:0-1}
\left.
\begin{array}{c}
\frac{d }{dt}x(t) = q(x(t),t)  \;,\\
x(0) = y\;,
\end{array}
\right\}
\end{eqnarray}
with $y$ a constant value. 
This ODE will receive the name of {\it characteristic ODE}. 
On the other hand, we define  
\begin{eqnarray}
  \hat{q}(t) = q(x(t),t)\;, h(0):=q(x(0),0)=h_0(y)\;,
\end{eqnarray}
where $y$ is that given in (\ref{eq:exact:0-1}). With these definitions, (\ref{eq:exact:0-0}) becomes an ODE
given by
\begin{eqnarray}\label{eq:exact:0-2}
\left.
\begin{array}{c}
\frac{d}{dt}\hat{q}(t) = s(\hat{q}(t))   \;,\\
\hat{q}(0) = h(0)\;.
\end{array}
\right\}
\end{eqnarray}
This ODE is called here, {\it equivalent ODE}.

Now, let us see the following definition in order to call for
ensuring the existence of solutions.
\begin{definition}
The primitive of a function $f(x)$ is a differentiable  function $F(x)$ such that
\begin{eqnarray}
\frac{d }{dx } F(x) = f(x)\;.
\end{eqnarray}
\end{definition}
Next lemma sets the type of source terms which are considered in this work.
\begin{lemma}\label{lemma:1}
  If $s(\hat{q})^{-1}$ contains a primitive with respect to  $\hat{q}$, then  the {\it
    equivalent ODE} is solvable. Additionally, there exists a function
  $\mathcal{E}(t,h(0))$ such that 
\begin{eqnarray}\label{eq:exact:0-3}
\left.
\begin{array}{c}
\frac{\partial }{\partial t}\mathcal{E}(t,h(0)) = s(\hat{q}(t))\;,\\
\mathcal{E}(0,h(0)) = h(0)\;\\
\end{array}
\right\}
\end{eqnarray}
and $\hat{q}(t) = \mathcal{E}(t,h(0))\;.$
\end{lemma}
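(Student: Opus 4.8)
The plan is to solve the \emph{equivalent ODE} \eqref{eq:exact:0-2} explicitly by separation of variables and then to read off the function $\mathcal{E}$ from the resulting implicit relation. First I would let $G$ denote a primitive of $s^{-1}$ with respect to $\hat{q}$, whose existence is precisely the hypothesis of the lemma, so that $G'(\hat{q}) = s(\hat{q})^{-1}$. Rewriting the equation $\frac{d}{dt}\hat{q}(t) = s(\hat{q}(t))$ as $s(\hat{q}(t))^{-1}\,\frac{d}{dt}\hat{q}(t) = 1$ and recognizing the left-hand side as $\frac{d}{dt}G(\hat{q}(t))$ via the chain rule, I would integrate from $0$ to $t$ and use $\hat{q}(0)=h(0)$ to obtain the implicit relation $G(\hat{q}(t)) = t + G(h(0))$.

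Next I would invert $G$. Since $G' = s^{-1}$ is finite and does not vanish on the range of values taken by $\hat{q}$ (a primitive of $s^{-1}$ is only meaningful where $s$ is finite and nonzero), $G$ is strictly monotone there and therefore admits a differentiable inverse $G^{-1}$ on the corresponding interval. I would then \emph{define}
\begin{eqnarray*}
\mathcal{E}(t,h(0)) := G^{-1}\bigl(t + G(h(0))\bigr)\;.
\end{eqnarray*}
From the implicit relation above it follows at once that $\hat{q}(t) = G^{-1}(t+G(h(0))) = \mathcal{E}(t,h(0))$, which already establishes that the \emph{equivalent ODE} is solvable.

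It then remains to verify the two properties of $\mathcal{E}$ asserted in \eqref{eq:exact:0-3}. Evaluating at $t=0$ gives $\mathcal{E}(0,h(0)) = G^{-1}(G(h(0))) = h(0)$. Differentiating in $t$ and applying the inverse-function rule,
\begin{eqnarray*}
\frac{\partial}{\partial t}\mathcal{E}(t,h(0)) = (G^{-1})'\bigl(t+G(h(0))\bigr) = \frac{1}{G'\bigl(\mathcal{E}(t,h(0))\bigr)} = s\bigl(\mathcal{E}(t,h(0))\bigr) = s(\hat{q}(t))\;,
\end{eqnarray*}
which is exactly the required identity.

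The main (indeed essentially the only) obstacle is the invertibility step: one must restrict attention to an interval of $\hat{q}$-values on which $s$ stays finite and nonzero, so that $G$ is a genuine diffeomorphism onto its image, and correspondingly to a time interval on which $t + G(h(0))$ remains in the range of $G$. This is where the qualifier ``contains a primitive'' in the hypothesis is used, and it is the point at which the solution is in general only local in $t$; I would state the conclusion with this domain restriction understood.
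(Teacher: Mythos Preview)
Your proof is correct and follows essentially the same route as the paper: separate variables in the \emph{equivalent ODE}, use the primitive $G$ of $s^{-1}$ to obtain the implicit relation $G(\hat q(t))=t+G(h(0))$, and invert $G$ to define $\mathcal{E}(t,h(0))=G^{-1}(t+G(h(0)))$. Your version is in fact somewhat more complete than the paper's, since you explicitly justify the local invertibility of $G$ via monotonicity and verify the two properties in \eqref{eq:exact:0-3} by direct computation, whereas the paper simply writes down $\mathcal{E}$ and stops.
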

\begin{proof}
We integrate (\ref{eq:exact:0-1}) as follows
\begin{eqnarray}\label{eq:exact:0-4}
  \int_{h(0)}^{\hat{q}(t)} s(q)^{-1} dq =\int_0^tdt  \;,
\end{eqnarray}
as $s(q)^{-1}$ has a primitive function, there exists $G(q)$ such that 
\begin{eqnarray}
  \frac{d}{dq}G(q) = s(q)^{-1}\;. 
\end{eqnarray}
Therefore, $\mathcal{E}(t,h(0)) $ is the solution to $G(\mathcal{E})-G(h(0)) -t=0  \;.$ 
So, if there exists the inverse function of  $G(q)$, which is  denoted here
by $G^{-1}(q)$, the function $\mathcal{E}$ is explicitly given by
\begin{eqnarray}
 \mathcal{E}(t,h(0))= G^{-1}(t+G(h(0)))  \;.
\end{eqnarray}
In any case  the exact solution is obtained as

\begin{eqnarray}
\hat{q}(t)= \mathcal{E}(t,h(0))  \;.
\end{eqnarray}
\end{proof}

Once the solution to the {\it equivalent  ODE} is
available and observing that $h(0):=h_0(y)$, the {\it characteristic ODE} takes the form
\begin{eqnarray}\label{eq:exact:0-5}
\left.
 \begin{array}{c}
  \frac{d}{dt}x(t)=\mathcal{E}(t,h_0(y))\;,\\
x(0) = y\;.
\end{array}
\right\}
\end{eqnarray}
The following lemma deals with the existence of solution for this ODE.
\begin{lemma}\label{lemma:2} 
If $\mathcal{E}(t,h_0(y)$ has a primitive function $\mathcal{F}(t,h_0(y))$,
such that \\$\frac{d}{dt}\mathcal{F}(t,h_0(y))=\mathcal{E}(t,h_0(y))\;$
and $\mathcal{F}(0,h_0(y))=0\;.$ Then, the
characteristic ODE has the exact solution
\begin{eqnarray}\label{eq:exact:0-6}
x = y +\mathcal{F}(t,h_0(y))\;.  
\end{eqnarray}
\end{lemma}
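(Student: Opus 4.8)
The plan is to integrate the \emph{characteristic ODE} (\ref{eq:exact:0-5}) directly, treating $y$ as a fixed parameter so that $\mathcal{E}(t,h_0(y))$ is a function of $t$ alone along the characteristic emanating from $x(0)=y$. Since by hypothesis $\mathcal{E}(t,h_0(y))$ admits a primitive $\mathcal{F}(t,h_0(y))$ with respect to $t$, the Fundamental Theorem of Calculus applies on $[0,t]$.

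First I would write
\begin{eqnarray}
\int_0^t \frac{d}{d\tau} x(\tau)\, d\tau = \int_0^t \mathcal{E}(\tau,h_0(y))\, d\tau = \mathcal{F}(t,h_0(y)) - \mathcal{F}(0,h_0(y))\;,
\end{eqnarray}
using $\frac{d}{d\tau}\mathcal{F}(\tau,h_0(y))=\mathcal{E}(\tau,h_0(y))$ for the last equality. The left-hand side equals $x(t)-x(0)$, and invoking the two initial conditions $x(0)=y$ and $\mathcal{F}(0,h_0(y))=0$ collapses this to $x(t) = y + \mathcal{F}(t,h_0(y))$, which is exactly (\ref{eq:exact:0-6}).

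To be thorough I would then verify that this candidate is genuinely a solution: differentiating $x(t)=y+\mathcal{F}(t,h_0(y))$ in $t$ returns $\mathcal{E}(t,h_0(y))$ by the defining property of the primitive, recovering the ODE, and evaluating at $t=0$ gives $x(0)=y+\mathcal{F}(0,h_0(y))=y$, recovering the initial datum. Thus both conditions in (\ref{eq:exact:0-5}) hold.

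There is essentially no hard step here; the only point that deserves a word is that the manipulation is legitimate precisely because Lemma \ref{lemma:1} has already supplied $\hat{q}(t)=\mathcal{E}(t,h_0(y))$ as a well-defined function of $t$ along each characteristic, so $\mathcal{E}(\cdot,h_0(y))$ is integrable and the primitive $\mathcal{F}$ exists by assumption. If one wishes, uniqueness of the solution follows from the standard observation that the right-hand side of the \emph{characteristic ODE} does not depend on $x$, so any two solutions differ by a constant fixed by the common initial value.
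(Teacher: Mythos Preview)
Your proof is correct and follows the same approach as the paper: integrate the characteristic ODE (\ref{eq:exact:0-5}) directly and invoke the primitive $\mathcal{F}$ together with the two initial conditions. The paper's version is terser (it writes $\int_y^x dx=\int_0^t \mathcal{E}\,dt$ and then simply cites the properties of $\mathcal{F}$), while you add a verification step and a uniqueness remark, but there is no substantive difference.
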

\begin{proof}
Integrating the ODE (\ref{eq:exact:0-5}), we obtain
\begin{eqnarray}
 \begin{array}{c}
 \int_{y}^{x} dx=\int_0^t\mathcal{E}(t,h_0(y))dt\;.
\end{array}
\end{eqnarray}
So, by using the properties of $\mathcal{F}(t,h_0(y))$, the result holds.
\end{proof}

\begin{remark}
The value $y$, is  a constant for the characteristic ODE
(\ref{eq:exact:0-5}). However, if values $x$ and $t$ are set in (\ref{eq:exact:0-6}), there exist a
constant $y$ satisfying (\ref{eq:exact:0-6}). Therefore we can
identify a such constant by $y=y(x,t)\;.$ 
\end{remark}

\begin{proposition}\label{proposition:1}
If $s(q)^{-1} $ and $\mathcal{E}(t,h_0(y))$ have their respective
primitive functions. The problem (\ref{eq:exact:0-0}) has the exact solution 
\begin{eqnarray}\label{burger:exact-1}
q(x,t) = \mathcal{E}(t,h_0(y))\;,  
\end{eqnarray}
where $y$ satisfies 
\begin{eqnarray}\label{burger:exact-2}
x = y +\mathcal{F}(t,h_0(y))\;,  
\end{eqnarray}
with $\mathcal{F}(t,h_0(y))$ the primitive of $\mathcal{E}(t,h_0(y))$
with respect to $t\;.$
\end{proposition}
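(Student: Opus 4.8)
The plan is to verify that the pair $(q,y)$ defined by (\ref{burger:exact-1})--(\ref{burger:exact-2}) actually satisfies the balance law (\ref{eq:exact:0-0}), by assembling the three ingredients already proved: Lemma \ref{lemma:1} (solvability of the equivalent ODE and the existence of $\mathcal{E}$), Lemma \ref{lemma:2} (solvability of the characteristic ODE via the primitive $\mathcal{F}$), and the Remark (that for fixed $(x,t)$ equation (\ref{burger:exact-2}) determines a constant $y=y(x,t)$). So the first step is simply to invoke Lemma \ref{lemma:1}, which gives that $\hat q(t)=\mathcal{E}(t,h(0))$ solves the equivalent ODE (\ref{eq:exact:0-2}) with $h(0)=h_0(y)$; then invoke Lemma \ref{lemma:2}, which gives that $x(t)=y+\mathcal{F}(t,h_0(y))$ solves the characteristic ODE (\ref{eq:exact:0-5}). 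The hypothesis that $s(q)^{-1}$ and $\mathcal{E}(t,h_0(y))$ possess primitives is exactly what is needed to license these two invocations.

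The second, and genuinely substantive, step is to run the characteristic method in reverse: show that defining $q(x,t):=\mathcal{E}(t,h_0(y(x,t)))$, with $y(x,t)$ the root of (\ref{burger:exact-2}), yields a function satisfying the PDE. I would differentiate the identity $x=y(x,t)+\mathcal{F}(t,h_0(y(x,t)))$ implicitly with respect to $x$ and with respect to $t$ to get expressions for $\partial_x y$ and $\partial_t y$ in terms of $\mathcal{F}$, $\mathcal{E}$, $h_0$ and their derivatives; here one uses $\partial_t\mathcal{F}=\mathcal{E}$ from Lemma \ref{lemma:2}. Then, writing $\partial_t q + q\,\partial_x q$ via the chain rule as $\partial_t\mathcal{E} + (\partial_{h(0)}\mathcal{E})\,h_0'(y)\,(\partial_t y + \mathcal{E}\,\partial_x y)$ and substituting the implicit-function expressions, the coefficient of $(\partial_{h(0)}\mathcal{E})h_0'(y)$ should collapse to zero, leaving $\partial_t q + q\,\partial_x q = \partial_t\mathcal{E}(t,h_0(y)) = s(\mathcal{E}(t,h_0(y))) = s(q)$, where the middle equality is (\ref{eq:exact:0-3}) of Lemma \ref{lemma:1}. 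Finally, setting $t=0$ in (\ref{burger:exact-2}) gives $x=y$ (since $\mathcal{F}(0,\cdot)=0$), so $q(x,0)=\mathcal{E}(0,h_0(x))=h_0(x)$ by the initial condition in (\ref{eq:exact:0-3}), recovering the initial data.

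The main obstacle I anticipate is not any single computation but the well-posedness of the implicit definition of $y(x,t)$: one needs $1+\partial_y\mathcal{F}(t,h_0(y))\neq 0$ so that the implicit function theorem applies and $\partial_x y,\partial_t y$ are well defined — this is precisely the non-characteristic / no-shock-formation condition, and it is where continuity (indeed differentiability) of $h_0$ enters, as flagged in the introduction. I would state this as a standing assumption (the solution is claimed only on the region of the $x$--$t$ plane where characteristics do not cross), note that on that region $y(x,t)$ is as smooth as the data allow, and then the chain-rule manipulation above goes through cleanly. A secondary, purely bookkeeping, point is to be careful that $\mathcal{E}$ is differentiable in its second argument and that $h_0$ is differentiable wherever these partials are taken; since the statement as given only asks for a formal exact solution in the classical sense, it suffices to carry out the verification wherever all the indicated primitives and inverses are differentiable.
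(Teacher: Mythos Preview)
Your proposal is correct and follows essentially the same route as the paper's proof: invoke Lemmas \ref{lemma:1} and \ref{lemma:2} for the construction, then verify the PDE by implicitly differentiating (\ref{burger:exact-2}) in $x$ and $t$, applying the chain rule to $q=\mathcal{E}(t,h_0(y))$, and checking that the combination $\partial_t y + q\,\partial_x y$ vanishes so that only $\partial_t\mathcal{E}=s(q)$ remains. Your write-up is in fact slightly more complete than the paper's, since you also verify the initial condition and make explicit the implicit-function hypothesis $1+\partial_y\mathcal{F}\neq 0$, which the paper uses tacitly when it inverts the factor $1+\partial_{h_0}\mathcal{F}\,h_0'$.
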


\begin{proof}
The construction of this function is given by lemmas (\ref{lemma:1}) and
(\ref{lemma:2}).
Now, we are going to probe that $q(x,t)$ solves (\ref{eq:exact:0-0}).

By the chain rule
\begin{eqnarray}
\left.
  \begin{array}{ccc}
    \partial_t q &=& \frac{\partial }{\partial y}
    \mathcal{E}\frac{\partial y}{\partial t} + \frac{\partial }{\partial t}
    \mathcal{E}\;,\\

    \partial_x q &=& \frac{\partial }{\partial y}
    \mathcal{E}\frac{\partial y}{\partial x}\;.
  \end{array}
\right\}
\end{eqnarray}
Then 
\begin{eqnarray}
  \begin{array}{c}
    \partial_t q +q\partial_x q = \frac{\partial }{\partial y}
    \mathcal{E}(\frac{\partial y}{\partial t}+q\frac{\partial y}{\partial x}) + \frac{\partial }{\partial t}
    \mathcal{E}\;.\\
  \end{array}
\end{eqnarray}
On the other hand, we have
\begin{eqnarray}
  \begin{array}{ccc}
   0 &=&  \frac{\partial y}{\partial t} + \frac{\partial }{\partial t} \mathcal{F}+\frac{\partial }{\partial
   (h_0)}\mathcal{F}h_0(y)'\frac{\partial y}{\partial t}\;,\\

   1 &=&  \frac{\partial y}{\partial x} + \frac{\partial }{\partial
   (h_0)}\mathcal{F}h_0(y)'\frac{\partial y}{\partial x}\;.\\
  \end{array}
\end{eqnarray}
Therefore, as $\mathcal{F}$ is the primitive of $\mathcal{E}$ and from (\ref{burger:exact-1}), we have
\begin{eqnarray}
  \begin{array}{c}
\frac{\partial y}{\partial t}+q\frac{\partial y}{\partial x}=
\biggl( 1+\frac{\partial }{\partial h_0}F h_0' \biggr)^{-1}
(-\frac{\partial }{\partial t} \mathcal{F} + q) =0 \;.

  \end{array}
\end{eqnarray}
Finally, we note that 
\begin{eqnarray}
  \begin{array}{c}
\frac{\partial }{\partial t} \mathcal{E}(t,h_0(y)) = s(q(x,t))\;
  \end{array}
\end{eqnarray}
and so, the result holds.
\end{proof}

\begin{remark}
Note that in the general case, (\ref{burger:exact-2})  is an algebraic equation and the bisection method is used to solve them. The regularity requirements of the bisection method is that, functions have to be at least continuous ones, which is ensured by taking $h_0(x)$ a continuous function. 
\end{remark}

In the following sections we will present some applications of the strategy describe above.
\section{Homogeneous Burgers equations}\label{no:source}
In this section we apply the methodology seen in section
\ref{general:source}, to solve the problem
\begin{eqnarray}\label{eq:no:0-0}
\left.
\begin{array}{c}
\partial_t q(x,t) +\partial_x \biggl(\frac{q(x,t)^2}{2} \biggr) = 0\;,  \\
q(x,0) = h_0(x)\;.
\end{array}
\right\}
\end{eqnarray}
Then, the {\it equivalent ODE} has the form
\begin{eqnarray}\label{eq:no:0-1}
\left.
\begin{array}{c}
\frac{d}{dt}\hat{q}(t) = 0\;,\\
\hat{q}(0) = h(0)\;.  
\end{array}
\right\}
\end{eqnarray}
This ODE is solvable and the exact solution has the form
$\hat{q}(t)=\mathcal{E}(t,h(0))=h(0)\;.$  Therefore, the  {\it
  characteristic ODE} takes the form
\begin{eqnarray}\label{eq:no:0-2}
\left.
\begin{array}{c}
\frac{d}{dt}x(t) = h_0(y)\;,\\
x(0) = y\;,  
\end{array}
\right\}
\end{eqnarray}
as $\mathcal{E}(t,h(0))$ has the primitive function
$\mathcal{F}(t,h(0))=t  h(0)\;, $ we obtain 

\begin{eqnarray}\label{eq:no:0-3}
x = y + th_0(y)\;.  
\end{eqnarray}
However, $h_0(y)=h_0(x(0))=q(x(0),0)=q(x,t)$. Therefore,
(\ref{eq:no:0-3}) becomes 
\begin{eqnarray}\label{eq:no:0-4}
x = y + tq(x,t)\;.  
\end{eqnarray}
Hence the exact solution  to (\ref{eq:no:0-0}), is computed as
\begin{eqnarray}\label{eq:no:0-4}
q(x,t) = h_0(x-tq(x,t))\;,  
\end{eqnarray}
which is the conventional solution of the homogeneous Burgers equation for general
smooth initial conditions.

\begin{example}
Let us consider the problem (\ref{eq:no:0-0}) with $h_0(x) = x\;.$  A simple inspection shows that, $u(x,t)=\frac{x}{t+1}$, is the exact solution.   Following the present methodology we are going to obtain this exact solution. 

To start, let us note that  {\it equivalent ODE} has the exact solution 
\begin{eqnarray}\label{eq:no:ex:0-0}
\hat{q}(t) = h(0)\;.  
\end{eqnarray}
Therefore, the exact solution  to (\ref{eq:no:0-0}) is given by
\begin{eqnarray}\label{eq:no:ex:0-2}
q(x,t) = h_0(y)\;,  
\end{eqnarray}
with $y$ satisfying the relationship obtained from the {\it
  characteristic ODE}, which by using the form of $h_0(y)$ has the form
\begin{eqnarray}\label{eq:no:ex:0-1}
x = y  + th_0(y)=y(1+t)\;.  
\end{eqnarray}
So, by solving for $y$  the sought solution is obtained
\begin{eqnarray}\label{eq:no:ex:0-3}
q(x,t) = h_0(y)=y=\frac{x}{t+1}\;.  
\end{eqnarray}
\end{example}

\section{Burgers's equation with a linear-source term}\label{linear:source}
Now, let us consider the PDE given by
\begin{eqnarray}\label{eq:linear:0-0}
\left.
\begin{array}{c}
\partial_t q(x,t) +\partial_x \biggl(\frac{q(x,t)^2}{2} \biggr) =
\beta q(x,t)\;,  \\
q(x,0) = h_0(x)\;.
\end{array}
\right\}
\end{eqnarray}
Then, the {\it equivalent ODE} has the form
\begin{eqnarray}\label{eq:linear:0-1}
\left.
\begin{array}{c}
\frac{d}{dt}\hat{q}(t) = \beta \hat{q}(t)\;,\\
\hat{q}(0) = h(0)\;.  
\end{array}
\right\}
\end{eqnarray}
This ODE is solvable and the exact solution has the form
$\hat{q}(t)=\mathcal{E}(t,h(0))=h(0)exp(\beta t)\;.$  Therefore, the  {\it
  characteristic ODE} takes the form
\begin{eqnarray}\label{eq:linear:0-2}
\left.
\begin{array}{c}
\frac{d}{dt}x(t) = h_0(y)e^{\beta t}\;,\\
x(0) = y\;,  
\end{array}
\right\}
\end{eqnarray}
as $\mathcal{E}(t,h(0))$ has the primitive function
$\mathcal{F}(t,h(0))= (\frac{e^{\beta t}-1}{\beta}) h(0)\;, $ we obtain 

\begin{eqnarray}\label{eq:linear:0-3}
x = y + \frac{e^{\beta t}-1}{\beta}h_0(y)\;.  
\end{eqnarray}
Therefore, the exact solution to (\ref{eq:linear:0-0}) is given by
\begin{eqnarray}\label{eq:linear:0-4}
q(x,t) = h_0(y)e^{\beta t}\;,  
\end{eqnarray}
with $y$ satisfying (\ref{eq:linear:0-3}).

\begin{example}
  The function $u(x,t)=\frac{\beta x e^{\beta t}}{\beta - 2 + e^{\beta t}}$ is the exact solution
of (\ref{eq:linear:0-0}) with $h_0(x) = \frac{\beta x}{\beta -1}\;,$
with $\beta \neq 1$\;.

To find this solution by using the present methodology we
note that the {\it equivalent ODE} has the exact solution 
\begin{eqnarray}\label{eq:linear:ex:0-0}
\hat{q}(t) = h(0) e^{\beta t}\;.  
\end{eqnarray}
On the other hand, the  {\it characteristic ODE} due to the form of $h_0(y)$, has the exact solution
\begin{eqnarray}\label{eq:linear:ex:0-1}
x = y  + h_0(y)\frac{e^{\beta t}-1}{\beta }= y (1 +\frac{e^{\beta t -1
  }}{\beta -1} )\;.  
\end{eqnarray}
Therefore, the exact solution  to (\ref{eq:linear:0-0}) is given by
\begin{eqnarray}\label{eq:linear:ex:0-2}
q(x,t) = h_0(y)e^{\beta t}\;,  
\end{eqnarray}
with $y$ satisfying (\ref{eq:linear:ex:0-1}), which finally provides
\begin{eqnarray}\label{eq:linear:ex:0-3}
q(x,t) = h_0(y)e^{\beta t}=\frac{\beta x e^{\beta t}}{\beta -2 +e^{\beta t}}\;.  
\end{eqnarray}
\end{example}

\begin{example}
Let us consider the problem (\ref{eq:linear:0-0}) in the interval $[0,1]\;,$ with the
initial condition $h_0(x)\;,$ given by
\begin{eqnarray}\label{eq:linear:ex-3:1}
h_0(x) =\left\{
\begin{array}{ccc}
2 &,& x \leq 0.1 \;,\\
2+\frac{(x-0.1)}{0.1} &,& 0.1 \leq x \leq 0.2 \;,\\
3 &,& 0.2 \leq x \leq 0.4 \;,\\
3-\frac{(x-0.4)}{0.2} &,& 0.4 \leq x \leq 0.6 \;,\\
2 &,& x \geq 0.6 \;.\\
 \end{array}
\right.
\end{eqnarray}
In that case, the equation (\ref{eq:linear:0-3}) is not explicitly solvable for $y$. In this work we use the bisection method
to obtain $y$. 

\begin{figure}
  \centering
  \includegraphics[scale=0.5]{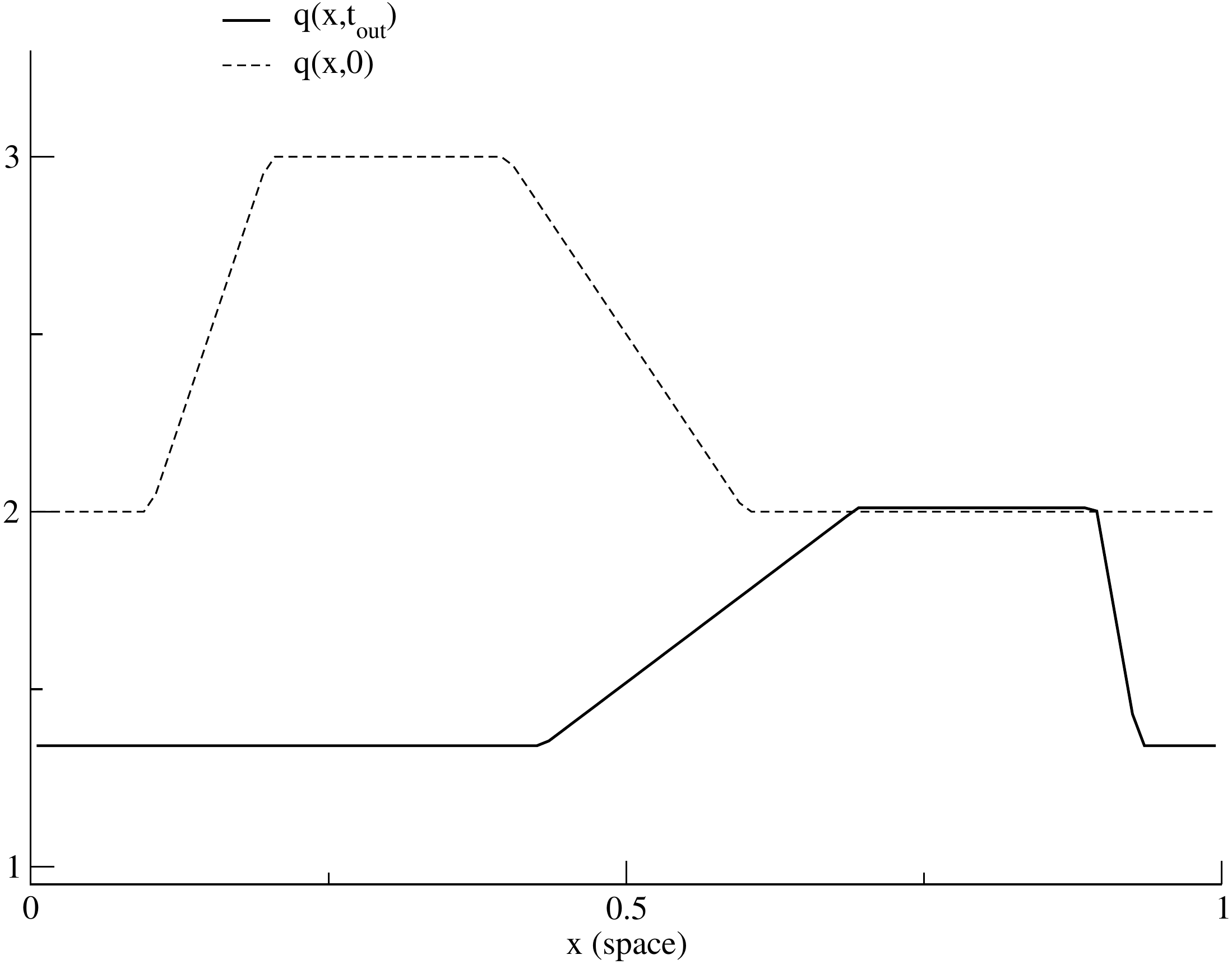}
\caption{Burgers's equation with linear source term. 
Initial condition (dash line) and the solution at time $t_{out}=0.2$
(full line) for $\beta = -2$ .}\label{fig:linSource:1}
\end{figure}

Figure \ref{fig:linSource:1}, shows initial condition
(\ref{eq:linear:ex-3:1}) and the solution of (\ref{eq:linear:0-0})
at time $t_{out}=0.2$ for $\beta=-2\;.$

\end{example}

\section{Burgers's equation with a quadratic source term}\label{quadratic:source}
Let us consider the partial differential equation
\begin{eqnarray}\label{eq:exact:1-1}
\left.
\begin{array}{c}
\partial_t q(x,t)+\partial_x\biggl(\frac{q(x,t)^2}{2}\biggr) = \beta q(x,t)^2\;,\\
q(x,t) = h_0(x)\;.
\end{array}
\right\}
\end{eqnarray}\label{eq:exact:2-0}
The {\it equivalent ODE} has the form
\begin{eqnarray}\label{eq:exact:3-0}
\left.
\begin{array}{c}
\frac{d \hat{q}(t)}{dt}=\beta \hat{q}(t)^2\;,
\\
\hat{q}(0) = h(0)\;,
\end{array}
\right\}
\end{eqnarray}
which is solvable and the exact solution is
\begin{eqnarray}\label{eq:exact:3-1}
\hat{q}(t) = \mathcal{E}(t,h(0))=\frac{h(0)}{1-\beta t h(0)}\;.
\end{eqnarray}
On the other hand, we note that the {\it characteristic ODE} 
\begin{eqnarray}\label{eq:exact:2-2}
\left.
  \begin{array}{c}
\frac{d}{dt}x(t) = q(x,t)\;,\\
x(0)=y\;,
  \end{array}
\right\}  
\end{eqnarray}
for $q(x,t)$ satisfying (\ref{eq:exact:3-1}) has the solution
\begin{eqnarray}\label{eq:exact:3-2}
x = y -\frac{ln(1-\beta t h_0(y))}{\beta} \;.
\end{eqnarray}
Therefore, the solution of (\ref{eq:exact:1-1}) is given by
\begin{eqnarray}\label{eq:exact:4-0}
q(x,t) = \frac{h_0(y)}{1-\beta t h_0(y)}\;,
\end{eqnarray}
with $y$ satisfying (\ref{eq:exact:3-2}).

\begin{example}
Let us consider (\ref{eq:exact:1-1}) in the interval $[0,1]\;,$ with initial condition 
\begin{eqnarray}
h_0(x) = \sin(2\pi x)\;.
\end{eqnarray}
With this choice, $y$ is not explicitly obtained from
(\ref{eq:exact:3-2}). Therefore we use the bisection method. Figure
\ref{fig:quadratic:1}, shows the initial condition (dash line ) and
the  solution at $t_{out}=0.15$ and for $\beta = -2\;.$
\begin{figure}
  \centering
  \includegraphics[scale=0.5]{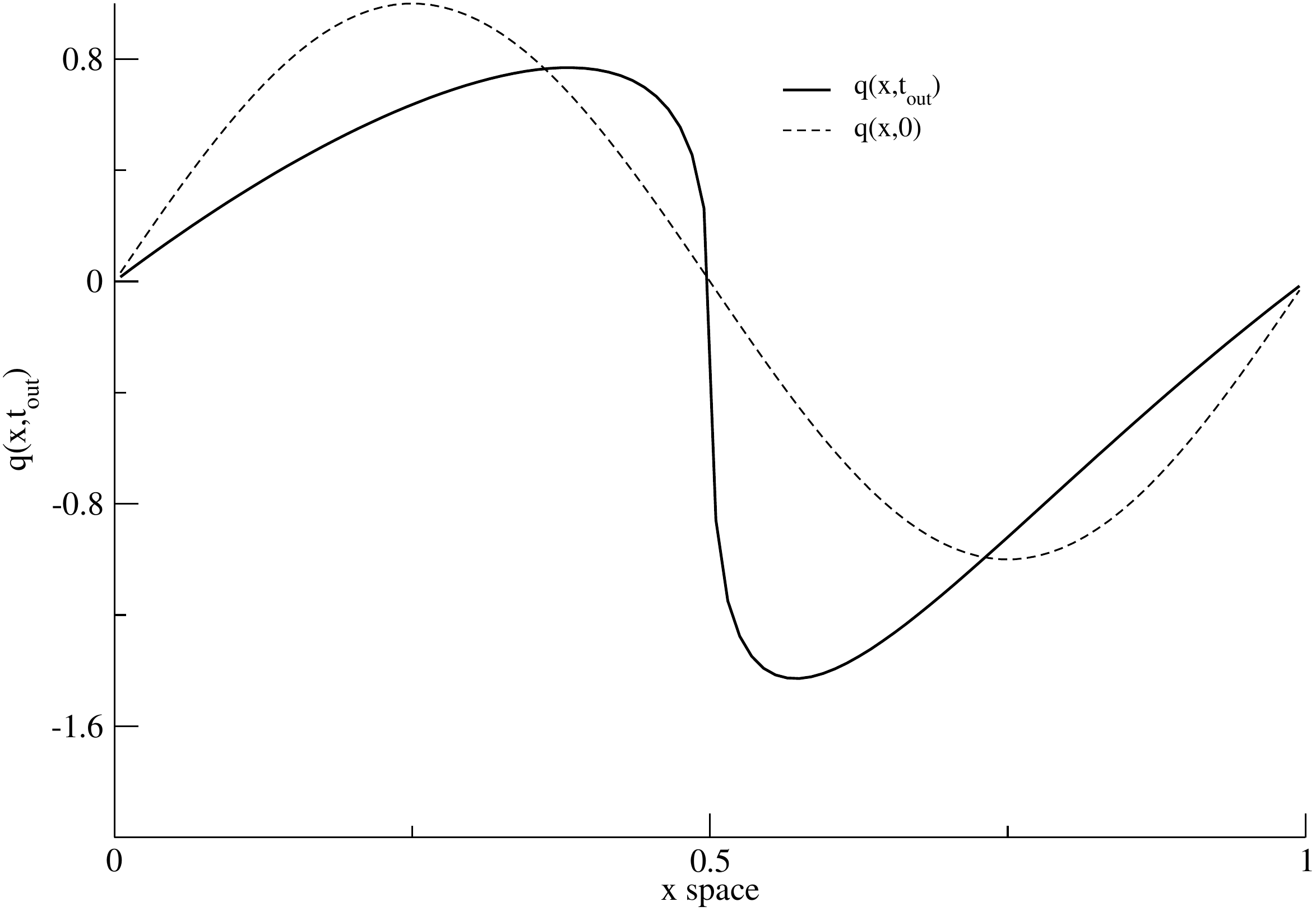}
\caption{Burgers's equation with quadratic source term. Initial
  condition (dash line) and the exact solution at time $t_{out}=
  0.15$ and for $\beta = -2\;.$}\label{fig:quadratic:1} 
\end{figure}
\end{example}

\section{Burgers's equation with a non-linear source term}\label{non-linear:source}
Let us consider the partial differential equation
\begin{eqnarray}\label{eq:exact-non-lin:1-1}
\left.
\begin{array}{c}
\partial_t q(x,t)+\partial_x\biggl(\frac{q(x,t)^2}{2}\biggr) = e^{\beta q(x,t)}\;,\\
q(x,t) = h_0(x)\;.
\end{array}
\right\}
\end{eqnarray}
So, the {\it equivalent ODE} 
\begin{eqnarray}\label{eq:exact-non-lin:3-0}
\left.
\begin{array}{c}
\frac{d \hat{q}(t)}{dt}=e^{\beta \hat{q}(t)}\;,
\\
\hat{q}(0) = h(0)\;,
\end{array}
\right\}
\end{eqnarray}
has the exact solution
\begin{eqnarray}
\hat{q}(t) =\mathcal{E}(t,h(0))= -\frac{ln(e^{-\beta h(0)}-\beta t  ) }{\beta}\;.
\end{eqnarray}
On the other hand, the {\it characteristic ODE} 
\begin{eqnarray}\label{eq:exact-non-lin:2-2}
\left.
  \begin{array}{c}
\frac{d}{dt}x(t) = -\frac{ln(e^{-\beta h(0)}-\beta t  ) }{\beta}\;,\\
x(0)=y\;.
  \end{array}
\right\}  
\end{eqnarray}
with $\hat{q}(t)=q(x(t),t)$ satisfying (\ref{eq:exact-non-lin:2-2})
has the exact solution
\begin{eqnarray}\label{eq:exact-non-lin:3-2}
x = y +\frac{ (e^{-\beta h_0(y)}-\beta t  )ln(e^{-\beta h_0(y)}-\beta t  )
  +\beta t+\beta h_0(y)e^{-\beta h_0(y)}}{\beta^2} 
 \;.
\end{eqnarray}
Therefore, the solution to (\ref{eq:exact-non-lin:1-1}) is given by
\begin{eqnarray}
q(x,t)=-\frac{ln(e^{-\beta h_0(y)}-\beta t  ) }{\beta}\;,
\end{eqnarray}
with $y$ a solution of (\ref{eq:exact-non-lin:3-2}).

\begin{example}
  Let us consider (\ref{eq:exact-non-lin:1-1}) in the interval
  $[0,1]$, with the initial condition
  \begin{eqnarray}
    \label{eq:1}
    h_0(x) = 2\biggl(\frac{1-w(x)}{2}\biggr)+\biggl(\frac{1+w(x)}{2}\biggr)\;.
  \end{eqnarray}
with $w(x) =\displaystyle \frac{0.3-x}{\sqrt{(0.3-x)^2+\varepsilon}}\;.$
Figure \ref{figure:non-lin:1} shows the initial condition for
$\varepsilon = 10^{-4}\;$ and the respective solution at time $t_{out}=0.26$ for $\beta=-1$.

\begin{figure}
  \centering
  \includegraphics[scale=0.4]{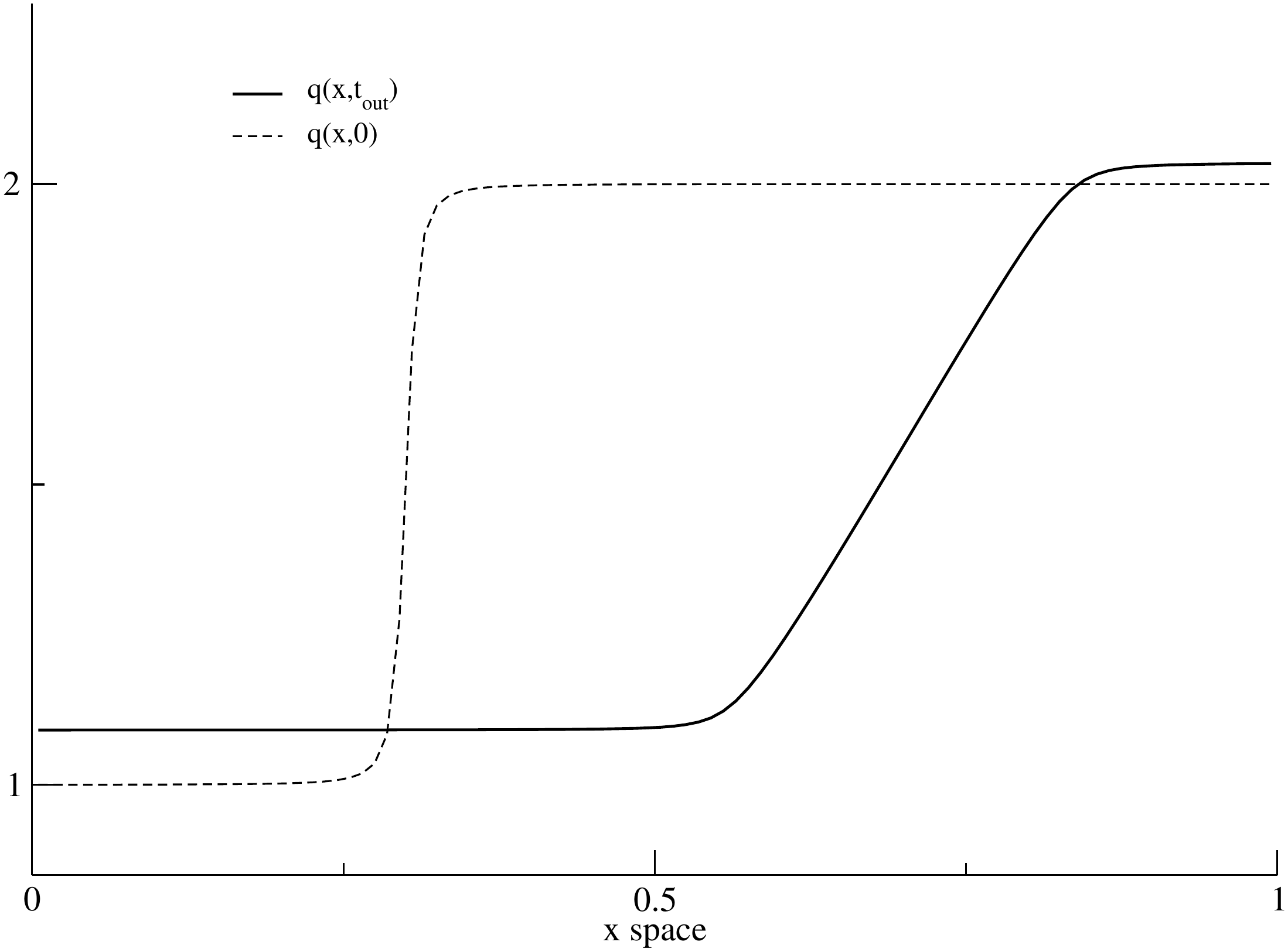}
\caption{Bueger's equation with exponential source term. Initial
  condition (dash line ) and solution at $t_{out}=0.26$ (full
  line). Parameters $\beta=-1\;$ and $\varepsilon = 10^{-4}\;.$}\label{figure:non-lin:1}
\end{figure}

\end{example}
\section{Conclusions}\label{conclusion}
In this work  we have presented a procedure to obtain analytic
solutions to the Burgers equation with a family of source terms aimed to
provide a set of tests which is suitable for the assessment of numerical schemes. 
The family of source terms is that formed by functions containing  primitive
functions with respect to its arguments.  
By following the characteristic method, we have defined two ODE's,
which were called {\it equivalent ODE} and {\it characteristic ODE}.
This family of source terms allows us the solvableness of
these ODE's and as a consequence the sought solution of the original
PDE is found. 
In the solution of the {\it characteristic ODE} an algebraic equation,
which depends on the initial condition, is obtained. The requirement of
continuity for the initial condition allows us to find the root of
this algebraic equation through the bisection method.

\bibliographystyle{plain} 
\bibliography{ref}
\end{document}